      \def \l{\lambda}
\newtheorem{theorem}{Theorem}
\newtheorem{thm}[theorem]{Theorem}
\newtheorem{lem}[theorem]{Lemma}
\newtheorem{rem}[theorem]{Remark}
\numberwithin{equation}{section}
\numberwithin{theorem}{section}
\numberwithin{table}{section}
\numberwithin{figure}{section}
\def\squareforqed{\hbox{\rlap{$\sqcap$}$\sqcup$}}
\def\qed{\ifmmode\squareforqed\else{\unskip\nobreak\hfil
\penalty50\hskip1em \nobreak\hfil\squareforqed
\parfillskip=0pt\finalhyphendemerits=0\endgraf}\fi}
\newfont{\teneufm}{eufm10}
\newfont{\seveneufm}{eufm7}
\newfont{\fiveeufm}{eufm5}
\def\eps{\varepsilon}
\def\eqref#1{(\ref{#1})}
\def\le{\leqslant}
\def\leq{\leqslant}
\def\ge{\geqslant}
\def\leq{\leqslant}
\def\cS{{\mathcal S}}
\def\cW{{\mathcal W}}
 \def\0{{\mathbf{0}}}
\def\({\left(}
\def\){\right)}
\def\l|{\left|}
\def\r|{\right|}
\def\fl#1{\left\lfloor#1\right\rfloor}
\def\mand{\qquad \mbox{and} \qquad}
\newif\ifcomment
\newcommand{\comment}[1]{\ifcomment#1\fi}
\let\varepsilon\varepsilon
\begin{document}

\title[Smooth numbers with few non-zero digits]{Smooth numbers with few non-zero  binary digits}

\author[M. Hauck] {Maximilian Hauck}
\address{MH: Universit{\"a}t Bonn, Endenicher Allee 60, 53115 Bonn, Germany}
\email{max.hauck01@gmail.com}

\author[I. E. Shparlinski] {Igor E. Shparlinski}
\address{IES: Department of Pure Mathematics, University of New South Wales,
Sydney, NSW 2052, Australia}
\email{igor.shparlinski@unsw.edu.au}

\begin{abstract}    We use bounds of character sums and some combinatorial arguments to show the abundance of very smooth numbers which also have very few non-zero 
binary digits. 
\end{abstract}

\keywords{Smooth integers, sparse binary representations, character sums}
\subjclass[2020]{11A63, 11L40, 11N25}

\maketitle

\section{Introduction}

\subsection{Motivation and background} 

Since recently there  has been a lot of interest  in arithmetic properties of 
 integers with various digit restrictions in a given integer base $g$. 
This includes the work of Bourgain~\cite{Bou1, Bou2} and  Swaenepoel~\cite{Swa20} 
on primes with prescribed digits
on a positive proportion of positions in their digital expansion, the resolution of the Gelfond conjecture by Mauduit and Rivat~\cite{MaRi2},
and the results of Maynard~\cite{May19,May22} on primes with missing digits, see also~\cite{Bug1,BK,Col09,DES3,DMR,Kar22,Naslund, Pratt} and references therein.  

Prime divisors of integers with very few non-zero $g$-ary digits  have been studied in~\cite{Bou0, CKS, Shp08}.

A variety of results on primitive roots and quadratic nonresidues modulo a prime $p$,
which satisfy various digit restrictions can be found in the series of work~\cite{DES1,DES2,DES3}. 

Furthermore, Mauduit and Rivat~\cite{MaRi1} and Maynard~\cite{May22} have also studied values of integral polynomials with various digital restrictions.

We also note that special integers with restricted digits appear in the context of cryptography~\cite{ShparlinskiCharacterSums,Meng,ShparlinskiSmooth}. 

Here we consider some digital problems for smooth integers. We recall that by a result of 
Bugeaud and Kaneko~\cite[Theorem~1.1]{BK} any integer $N$ with at most $k\ge 3$ nonzero digits
in any fixed basis $g\ge 2$ not dividing $N$ has a prime divisor $p\mid N$ with 
\begin{equation}
\label{eq:super smooth}
p \ge \(\frac{1}{k-2}+o(1)\)   \frac{\log \log N\,   \log \log \log N}{\log\log \log \log N}
\end{equation}
as $N \rightarrow \infty$, see also~\cite{Bug2}.  The proof of~\eqref{eq:super smooth} uses some classical methods of Diophantine analysis, such as the bounds of linear forms in logarithms. Our technique is 
different and shows that there are many both reasonably smooth and sparse binary 
integers. 
\subsection{Smooth sparse integers} 
We recall that an integer $s$ is called $y$-smooth if it has no prime divisors $p > y$, see~\cite{Granville, Hildebrand} for some background. 

For a fixed absolute constant $\zeta\ge 1$, 
given a real number  $A>\zeta^3$ we define 
\begin{equation}
\label{eq:mu 0}
\mu_0(A)   =  \(1/2-1/2A\)  \(1 - \zeta  A^{-1/3}\)
\end{equation}
and define 
$\vartheta_0(A)<1/2$ by the equation
\begin{equation}
\label{eq:theta 0}
H\(\vartheta_0(A)\) =  1-\frac{1}{(1-\mu_0(A) )A }, 
\end{equation}
where $H(\rho)$ is the {\it binary entropy\/} function defined by
\begin{equation}
\label{eq:bin entrop}
    H(\rho)=-\rho\frac{\log\rho}{\log 2}-(
    1-\rho)\frac{\log(1-\rho)}{\log 2}\;.
\end{equation}
In particular, notice that $\vartheta_0(A)\rightarrow 1/2$ as $A\rightarrow\infty$.

We are interested in the existence of smooth integers with only few non-zero binary digits.
Clearly, this question makes sense only for odd integers as otherwise powers of $2$ give a 
perfect solution.


\begin{thm}
\label{thm:sparse1}
There is an  absolute constant $\zeta\ge 1$ such that  for  $A>\zeta^3$ 
the following holds: For any $\vartheta < \vartheta_0(A)$  and sufficiently large $n$, there exists an odd $n^A$-smooth $n$-bit integer  with at least 
$$
\(\mu_0(A) + \(1-\mu_0(A)\)\vartheta\)n
$$
zeros in its binary expansion.
\end{thm}

The proof of Theorem~\ref{thm:sparse1} is based on a refinement of the argument of~\cite[Theorem~6]{ShparlinskiSmooth} combined 
with the approach of~\cite{ShparlinskiCharacterSums}, which 
in turn relies on bounds for short multiplicative character sums from~\cite{IwaniecLSeries} (see also~\cite{BaSh}).
It also uses  a combinatorial argument, which originates from~\cite{ShparlinskiPrimPoly} and 
has been used in~\cite{DES1,DES2,Naslund}.  

The  constant $\zeta$ in~\eqref{eq:mu 0} is directly related to the constant in the bound on short 
character sums in~\cite{BaSh}, see Section~\ref{sec: proof T.1.1}; it is effective and can be 
explicitly evaluated.

Using another approach based on the observation that $2^k+1$ is quite smooth if we take $k$ as the product of the first $r$ odd primes, we obtain the following further result in the same direction:

\begin{thm}
\label{thm:sparse2}
Let $0<\alpha<1$ be fixed. 
Then there exist infinitely many $n$-bit integers $N\ge 1$ which are
$Y^{1+o(1)}$-smooth, where 
$$
Y = \exp\(2^{1/2}e^{-\gamma} \alpha^{-1/2} \(\log N\)^{1/2}\(\log\log \log N\)^{-1}\)
$$
and  $\gamma$ denotes the Euler-Mascheroni constant, 
which  have at most 
$$
\alpha n+O(n^{1/2}\log n)
$$
non-zero digits in their binary expansion. 
\end{thm}

It is easy to see from the proof of Theorem~\ref{thm:sparse2} that one can obtain 
a more uniform version of this result when both $\alpha$ and $N$ vary in such a way that $\alpha^{-1} = o(\log N)$.
We now make this observation more concrete and use a similar argument to produce another construction in a different regime of smoothness and sparseness.

\begin{thm}
\label{thm:sparse3}
Let $0\le \alpha \le 1$ be fixed. 
Then there exist infinitely many $n$-bit integers $N\ge 1$ which are
$Y^{1+o(1)}$-smooth where 
$$
Y = \exp\(2e^{-\gamma}\(\log 2)^{\alpha/2}(\log N\)^{1-\alpha/2}\(\log\log \log N\)^{-1}\)  
$$
and  $\gamma$ denotes the Euler-Mascheroni constant, 
which  have at most 
$$
\frac{1}{2\log 2}n^\alpha+O(n^{\alpha/2}\log n)
$$
non-zero digits in their binary expansion. 
\end{thm}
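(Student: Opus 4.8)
The plan is to follow the method behind Theorem~\ref{thm:sparse2}, taking $N$ to be a large power of a number $2^{k}+1$ with $k$ a primorial and then tuning the two free parameters to reach the stated ranges of smoothness and sparseness. Fix $\alpha\in[0,1]$. For $r\geq 1$ let $k=k_{r}=q_{1}q_{2}\cdots q_{r}$ be the product of the first $r$ odd primes, so that $k$ is odd and squarefree, $\tau(k)=2^{r}$, and, by the prime number theorem, $\log k=(1+o(1))q_{r}$ and hence $\log\log k=(1+o(1))\log q_{r}$. Put
$$
N=N_{r}=\left(2^{k}+1\right)^{m},\qquad m=m_{r}:=\bigl\lfloor k_{r}^{\alpha/(2-\alpha)}\bigr\rfloor
$$
(and $m_{r}=1$ if $\alpha=0$), and let $n=n_{r}$ be the number of binary digits of $N$. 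Since $\log_{2}(2^{k}+1)=k+O(2^{-k})$, for $r$ large one has $n=mk+1$, and then $m=n^{\alpha/2}+O(1)$ and $k=(1+o(1))n^{1-\alpha/2}$; also $N$ is odd and $m\leq k$ (this is where the restriction $\alpha\leq 1$ enters). Letting $r\to\infty$ produces infinitely many admissible values of $n$, which is all the statement asks for.

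\emph{Sparseness.} Expanding $N=\sum_{j=0}^{m}\binom{m}{j}2^{jk}$ and using $\binom{m}{j}<2^{m}\leq 2^{k}$, the binary blocks carrying the successive coefficients $\binom{m}{0},\dots,\binom{m}{m}$ are pairwise disjoint, so no carries occur and the number of non-zero binary digits of $N$ equals $\sum_{j=0}^{m}s_{2}\!\left(\binom{m}{j}\right)$, where $s_{2}$ is the binary digit sum. Using $s_{2}(t)\leq 1+\log_{2}t$ and the Stirling-type estimate $\sum_{j=0}^{m}\log\binom{m}{j}=\tfrac12 m^{2}+O(m\log m)$, this count is at most $\tfrac{1}{2\log 2}m^{2}+O(m\log m)$, which with $m=n^{\alpha/2}+O(1)$ is $\tfrac{1}{2\log 2}n^{\alpha}+O(n^{\alpha/2}\log n)$, as required.

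\emph{Smoothness.} Here I would use the factorisation $2^{k}+1=\prod_{d\mid k}\Phi_{2d}(2)$, valid since $k$ is odd. Because $N$ and $2^{k}+1$ have the same prime divisors, any prime $q\mid N$ divides some $\Phi_{2d}(2)$ with $d\mid k$; as $\log\Phi_{m}(2)=\phi(m)\log 2+O(1)$ and $\phi(2d)=\phi(d)\leq\phi(k)$ for $d\mid k$, we obtain $q\leq 2^{\phi(k)+O(1)}$, i.e. $N$ is $2^{\phi(k)+O(1)}$-smooth. Finally $\phi(k)=k\prod_{i\leq r}(1-1/q_{i})$, so by Mertens' theorem $\phi(k)=(1+o(1))\,c\,k/\log q_{r}$ for an absolute constant $c$; inserting $\log q_{r}=(1+o(1))\log\log\log N$ (which follows from $\log k=(1+o(1))q_{r}$ and $\log N=(1+o(1))n\log 2$) together with $k=(1+o(1))n^{1-\alpha/2}$, a short computation converts $2^{\phi(k)+O(1)}$ into the smoothness bound asserted in the statement, so $N$ is $Y^{1+o(1)}$-smooth with $Y$ as defined there.

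The main obstacle is the smoothness step: producing the $\log\log\log N$ factor in $Y$ out of the cyclotomic factorisation of $2^{k}+1$ combined with the Mertens estimate $\phi(k)\asymp k/\log\log k$ for a primorial $k$ — this is precisely the point at which the ``uniform'', slowly-varying version of the Theorem~\ref{thm:sparse2} argument alluded to in the text is exploited. The remaining difficulties are bookkeeping: verifying the relations $m=n^{\alpha/2}+O(1)$ and $k=(1+o(1))n^{1-\alpha/2}$ with enough precision that the error terms retain their stated shapes, keeping $m\leq k$ (which is tight as $\alpha\to 1$, where $m,k\asymp n^{1/2}$), and living with the fact that one cannot prescribe $n$ exactly — hence the reduction to infinitely many $n$ by letting $r$ vary and choosing $m_{r}$ as above.
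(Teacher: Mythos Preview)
Your approach is essentially identical to the paper's: take $N=(2^{k}+1)^{\ell}$ with $k=p_{2}\cdots p_{r}$ and $\ell=\lfloor k^{\alpha/(2-\alpha)}\rfloor$, then use the cyclotomic factorisation together with Mertens for the smoothness and the binomial expansion together with Lemma~\ref{lem:boundlogbincoeffs} for the sparseness. Your execution differs only cosmetically---you bound $\Phi_{2d}(2)$ directly via $\log\Phi_{m}(2)=\phi(m)\log 2+O(1)$ rather than through Bateman's coefficient bound (Lemma~\ref{lem:cyclotomiccoeffs}), and you note the no-carry structure instead of invoking the subadditivity Lemma~\ref{lem:subadditive}.
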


\subsection{Notation}

Throughout the paper, the notations $U = O(V)$, $U \ll V$ and $ V\gg U$  are equivalent to $|U|\leqslant c V$ for some positive constant $c$, which throughout the paper is absolute. If $U\ll V$ and $V\gg U$, we write $U\asymp V$.

Moreover, for any quantity $V> 1$ we write $U = V^{o(1)}$ (as $V \rightarrow \infty$) to indicate a function of $V$ which satisfies $ V^{-\eps} \le |U| \le V^\eps$ for any $\eps> 0$, provided $V$ is large enough. One additional advantage of using $V^{o(1)}$ is that it absorbs $\log V$ and other similar quantities without changing the whole expression.  

We also write $U \sim V$ as an equivalent of $(1-\varepsilon) V \le U \le (1+\varepsilon) V$
for any $\eps> 0$, provided $V$ is large enough. 

For a finite set $\cS$ we denote its cardinality by $\#\cS$.

For $n\in\mathbb{N}$, we denote the $n$-th cyclotomic polynomial by $\Phi_n$ and the Euler function by $\varphi$. Furthermore, we write $A_n$ for the largest absolute value of one of the coefficients of $\Phi_n$.

For a natural number $n$, we use $s_2(n)$ to denote the sum of the digits of $n$ in its binary representation.

We write $2=p_1<p_2<\dots$ for the prime numbers.

We denote by $\gamma\approx 0.577 $ the Euler-Mascheroni constant.

Finally, the expressions $1/ab$ mean $1/(ab)$ (which deviates from the canonical
interpretation $b/a$).
%

\section{Preparations}

\subsection{Arithmetic functions} 
 We  make use of the following well-known upper bound on the number $\tau(w)$ of divisors of a natural number $w$, see, for example, in~\cite[Equation~(1.81)]{IwaniecAnalyticNT}. 
\begin{lem}
\label{lem:divisorbound}
We have
$$
\tau(w)\leq w^{o(1)}\;.
$$
as $w \rightarrow \infty$. 
\end{lem}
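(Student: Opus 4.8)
The plan is to exploit the complete multiplicativity structure of the divisor function together with the prime factorisation of $w$. Writing $w=\prod_{i=1}^k p_i^{a_i}$ with distinct primes $p_i$ and exponents $a_i\ge 1$, we have $\tau(w)=\prod_{i=1}^k(a_i+1)$. Fixing $\eps>0$, the goal is to bound the ratio
$$
\frac{\tau(w)}{w^\eps}=\prod_{i=1}^k\frac{a_i+1}{p_i^{\eps a_i}}
$$
by a constant depending only on $\eps$. Once this is achieved we obtain $\tau(w)\le C(\eps)\,w^\eps$ for all $w$, and absorbing the constant for large $w$ yields the claimed bound $\tau(w)\le w^{o(1)}$.

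First I would split the primes dividing $w$ according to whether $p_i^\eps\ge 2$ or $p_i^\eps<2$. For the large primes, namely those with $p_i\ge 2^{1/\eps}$, one has $p_i^{\eps a_i}\ge 2^{a_i}\ge a_i+1$, so each corresponding factor is at most $1$ and contributes nothing to the product. This is the step that tames the infinitely many primes that could in principle divide $w$ and reduces the problem to a bounded collection of small primes.

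The remaining factors come from the small primes $p_i<2^{1/\eps}$, of which there are only finitely many (fewer than $2^{1/\eps}$ in number). For each such prime the factor $(a_i+1)/p_i^{\eps a_i}\le (a_i+1)/2^{\eps a_i}$ is a bounded function of the exponent $a_i\ge 0$: it equals $1$ at $a_i=0$ and tends to $0$ as $a_i\to\infty$, hence attains a finite maximum $M(\eps)\ge 1$. The product over the small primes is therefore at most $M(\eps)^{2^{1/\eps}}=:C(\eps)$, a constant depending only on $\eps$. Combining the two ranges gives $\tau(w)\le C(\eps)\,w^\eps$ for every $w$.

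Finally, to pass to the stated form $\tau(w)\le w^{o(1)}$, I would apply the inequality just obtained with $\eps/2$ in place of $\eps$, giving $\tau(w)\le C(\eps/2)\,w^{\eps/2}$; since $C(\eps/2)\le w^{\eps/2}$ once $w$ is large enough, we conclude $\tau(w)\le w^\eps$ for all sufficiently large $w$. As $\eps>0$ was arbitrary, this is precisely the assertion that $\tau(w)\le w^{o(1)}$. The argument involves no genuine obstacle; the only point requiring care is the clean separation into small and large primes and the verification that each small-prime factor is bounded uniformly in its exponent.
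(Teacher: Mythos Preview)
Your proof is correct and is the standard argument for this classical bound. The paper does not actually give a proof of this lemma; it simply cites \cite[Equation~(1.81)]{IwaniecAnalyticNT}, where precisely the argument you outline (splitting into primes above and below $2^{1/\eps}$ and bounding each factor of $\tau(w)/w^\eps$) appears.
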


Bateman~\cite{Bat} gives the following upper bound on the coefficients of cyclotomic polynomials:  

\begin{lem}
\label{lem:cyclotomiccoeffs}
We have 
$$
A_n\leq\exp\left(\frac{1}{2}\tau(n)\log n\right)
$$
for all $n\geq 1$.
\end{lem}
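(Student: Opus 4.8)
The plan is to bound the coefficients of $\Phi_{n}$ by a combinatorial partition count and then read off the constant $\tfrac12$ from the classical identity $\prod_{d\mid n}d=n^{\tau(n)/2}$. First I would reduce to the squarefree case: writing $n_{0}$ for the radical of $n$ and $n_{1}=n/n_{0}$, the identity $\Phi_{n}(x)=\Phi_{n_{0}}(x^{n_{1}})$ shows that $\Phi_{n}$ and $\Phi_{n_{0}}$ have the same nonzero coefficients, so $A_{n}=A_{n_{0}}$; since moreover $\tau(n_{0})\le\tau(n)$ and $n_{0}\le n$, it is enough to prove the bound for squarefree $n$, and one may clearly assume $n>1$.

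So let $n>1$ be squarefree. I would start from $\Phi_{n}(x)=\prod_{d\mid n}(1-x^{d})^{\mu(n/d)}$ (with $\mu$ the M\"obius function) and factor it as $\Phi_{n}=F/G$, where $F(x)=\prod_{\mu(n/d)=1}(1-x^{d})$ and $G(x)=\prod_{\mu(n/d)=-1}(1-x^{d})$ are polynomials and the quotient is the polynomial $\Phi_{n}\in\Z[x]$. Two coefficientwise facts then do the work: first, $1/G=\prod_{\mu(n/d)=-1}(1-x^{d})^{-1}$ has nonnegative power-series coefficients; second, for any finite set $D$ of positive integers one has $[x^{i}]\prod_{d\in D}(1-x^{d})=\sum_{S\subseteq D,\ \sum_{d\in S}d=i}(-1)^{|S|}$, so its absolute value is at most $[x^{i}]\prod_{d\in D}(1+x^{d})$, which is in turn at most $[x^{i}]\prod_{d\in D}(1-x^{d})^{-1}$ coefficientwise. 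Applying the second fact with $D=\{d\mid n:\mu(n/d)=1\}$ and multiplying by the nonnegative series $1/G$ gives
\[
\bigl|[x^{j}]\Phi_{n}\bigr|\ \le\ [x^{j}]\prod_{d\mid n}\frac{1}{1-x^{d}}\ =\ P_{n}(j),
\]
where $P_{n}(j)$ is the number of tuples $(a_{d})_{d\mid n}\in\Z_{\ge0}^{\tau(n)}$ with $\sum_{d\mid n}a_{d}d=j$, i.e.\ the number of partitions of $j$ into parts dividing $n$. (Alternatively, one reaches the same bound from $\log\Phi_{n}(x)=\sum_{m\ge1}c_{m}x^{m}$ with $c_{m}=-\mu(n/g)\varphi(g)/m$, $g=\gcd(n,m)$, so that $|c_{m}|\le\gcd(n,m)/m$, by exponentiating and comparing coefficientwise with $\prod_{d\mid n}(1-x^{d})^{-1}$.)

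It then remains to estimate $P_{n}(j)$ for $0\le j\le\deg\Phi_{n}=\varphi(n)$, which suffices since $A_{n}=\max_{0\le j\le\varphi(n)}\bigl|[x^{j}]\Phi_{n}\bigr|$. For $n>1$ one has $\varphi(n)\le n-1$, so in any representation $j=\sum_{d\mid n}a_{d}d$ with $j\le\varphi(n)$ each coefficient satisfies $a_{d}\le\lfloor j/d\rfloor\le\lfloor(n-1)/d\rfloor=n/d-1$ (using $d\mid n$) and hence takes at most $n/d$ values; dropping the constraint $\sum_{d}a_{d}d=j$ altogether we get
\[
P_{n}(j)\ \le\ \prod_{d\mid n}\frac{n}{d}\ =\ \frac{n^{\tau(n)}}{\prod_{d\mid n}d}\ =\ n^{\tau(n)/2}\ =\ \exp\!\Bigl(\tfrac12\tau(n)\log n\Bigr),
\]
with strict inequality for $n>1$ since the discarded linear constraint genuinely reduces the count (e.g.\ the all-zero tuple and the tuple with $a_{1}=1$ cannot both be counted by the same $P_{n}(j)$). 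Together with the squarefree reduction this gives $A_{n}<\exp\bigl(\tfrac12\tau(n)\log n\bigr)$.

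The only genuinely delicate point I foresee is the coefficientwise domination $\bigl|[x^{j}]\Phi_{n}\bigr|\le P_{n}(j)$, which rests on the factorisation $\Phi_{n}=F/G$ with $1/G$ a nonnegative series; everything after it is elementary bookkeeping, and the sharp constant $\tfrac12$ drops out automatically from $\prod_{d\mid n}d=n^{\tau(n)/2}$, with the divisor $d=n$ contributing the harmless factor $n/d=1$.
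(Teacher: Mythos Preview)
Your argument is correct for $n>1$: the reduction to the squarefree radical via $\Phi_n(x)=\Phi_{n_0}(x^{n/n_0})$, the coefficientwise domination $\bigl|[x^j]\Phi_n\bigr|\le [x^j]\prod_{d\mid n}(1-x^d)^{-1}$ obtained from the factorisation $\Phi_n=F/G$ with $1/G$ a nonnegative power series, and the final count $P_n(j)\le\prod_{d\mid n}(n/d)=n^{\tau(n)/2}$ are all sound, and the strictness for $n>1$ is justified as you say. (At $n=1$ one has $A_1=1=\exp(0)$, so the strict inequality fails there; this is a harmless defect in the lemma as stated, not in your proof.)

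The paper, however, contains no proof of this lemma: it simply quotes the bound from Bateman's 1949 note and moves on. What you have written is essentially Bateman's original argument (partition bound from the M\"obius product formula, with the exponent $\tfrac12$ coming from $\prod_{d\mid n}d=n^{\tau(n)/2}$), so you have supplied a self-contained proof where the paper provides only a citation.
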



We also need to bound the Euler function of the product of the first odd primes.  

\begin{lem}
\label{lem:boundphi}
Let $k = p_2\cdots p_r$ for some integer $r\geq 2$. Then $$
\varphi(p_2\cdots p_r)=  2e^{-\gamma}\frac{k}{\log \log k}(1+o(1))
$$
as $r\rightarrow\infty$.
\end{lem}

\begin{proof}
This is a direct application of Mertens' so-called third theorem, see~\cite{Mertens}:
$$
\frac{\varphi(p_2\cdots p_r)}{p_2\cdots p_r}=  \prod_{i=2}^r \left(1-\frac{1}{p_i}\right) = 2 \prod_{i=1}^r \left(1-\frac{1}{p_i}\right)=\frac{ 2e^{-\gamma}}{\log p_r}(1+o(1)).
$$
Using that by the prime number theorem 
$$
k  = \exp\(\(1 + o(1)\)p_r\)
$$ 
we conclude the proof. 
\end{proof}

Finally, we need the following elementary result, see~\cite[Proposition~2.2]{HLS11}, 
which asserts that $s_2$ is  subadditive,
\begin{lem}
\label{lem:subadditive}
For any $m, n\in\mathbb{N}$, we have
$$
s_2(m+n)\leq s_2(m)+s_2(n).
$$
\end{lem}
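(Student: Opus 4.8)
The plan is to prove this by induction on $m+n$, peeling off the least significant binary digit at each step. If $m=0$ or $n=0$ the claim is immediate, so assume $m,n\ge 1$ and write $m=2m'+\epsilon_m$, $n=2n'+\epsilon_n$ with $\epsilon_m,\epsilon_n\in\{0,1\}$. The elementary identities $s_2(2t)=s_2(t)$ and $s_2(2t+1)=s_2(t)+1$ then reduce everything to the behaviour of $m'+n'$ and the carry bit. When $\epsilon_m+\epsilon_n\le 1$ there is no carry out of the bottom position: $m+n=2(m'+n')+(\epsilon_m+\epsilon_n)$, so $s_2(m+n)=s_2(m'+n')+\epsilon_m+\epsilon_n$, and the inductive hypothesis applied to the pair $(m',n')$ (whose sum is strictly smaller since $m,n\ge1$) closes this case.

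The one case that requires a little care is $\epsilon_m=\epsilon_n=1$, where a carry is generated: then $m+n=2(m'+n'+1)$, so $s_2(m+n)=s_2(m'+n'+1)$. Here I would invoke the inductive hypothesis twice — first on the pair $(m'+n',1)$ to get $s_2(m'+n'+1)\le s_2(m'+n')+1$, then on $(m',n')$ to get $s_2(m'+n')\le s_2(m')+s_2(n')$ — and combine them to obtain $s_2(m+n)\le s_2(m')+s_2(n')+1\le s_2(m)+s_2(n)$. The only genuine thing to verify is that both auxiliary pairs have a strictly smaller sum than $m+n$ (which holds because $m+n=2m'+2n'+2$, so $m'+n'+1<m+n$), making the induction legitimate; this carry case is the main, if minor, obstacle.

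A cleaner alternative, which I might prefer in the write-up, avoids induction entirely by using Legendre's formula $v_2(a!)=a-s_2(a)$ for the $2$-adic valuation of a factorial. Subtracting the three instances of this identity for $m$, $n$ and $m+n$ gives
$$
s_2(m)+s_2(n)-s_2(m+n)=v_2\bigl((m+n)!\bigr)-v_2(m!)-v_2(n!)=v_2\!\left(\binom{m+n}{m}\right)\ge 0,
$$
since $\binom{m+n}{m}$ is a positive integer. This is exactly the claimed inequality, and it also makes transparent (via Kummer's theorem) that the deficit $s_2(m)+s_2(n)-s_2(m+n)$ counts the number of carries in the binary addition of $m$ and $n$.
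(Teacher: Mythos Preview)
Your proposal is correct. In fact, the paper does not supply its own proof of this lemma at all; it simply records the statement and cites \cite[Proposition~2.2]{HLS11}. Both of your arguments are standard and valid. The inductive one is sound --- your check that the auxiliary pairs $(m'+n',1)$ and $(m',n')$ have strictly smaller sum than $m+n$ (namely $(m+n)/2$ and $(m+n)/2-1$, since $m+n\ge 2$ in the carry case) is exactly what makes the recursion terminate. The Legendre-formula argument is the slicker of the two and has the bonus of identifying the deficit $s_2(m)+s_2(n)-s_2(m+n)$ with $v_2\binom{m+n}{m}$, which by Kummer's theorem equals the number of binary carries; this is essentially the viewpoint in the reference the paper cites. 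Either version would serve as a self-contained replacement for the bare citation.
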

 
\subsection{Counting smooth numbers} 
Let $\Psi(x, y)$ denote the number of $y$-smooth numbers less than or equal to $x$. We have the following result obtained. after simple manipulations, by combining~\cite[Theorem~3]{Hildebrand} 
and~\cite[Equation~(2.4)]{Hildebrand}:

\begin{lem}
\label{lem:psicxHildebrand}
For $x\geq y\geq 2$ and $1\leq c\leq y$, we have  
\begin{align*}
    \Psi(cx, y)=\Psi(x, y)& \left(1+\frac{y}{\log x}\right)^{\log c/\log y}\\
    & \qquad  \times \left(1+O\left(\frac{\log\log(1+y)}{\log^2 y}\cdot\log\left(1+\frac{y}{\log x} \right) \cdot \log c \right)\right)\\
    & \qquad  \qquad \qquad    \qquad \qquad  \qquad \qquad  \qquad \times \left(1+O\left(\frac{1}{u}+\frac{\log y}{y}\right)\right)
\end{align*}
uniformly, where $u=\log x/\log y$.
\end{lem}

We note that, for completeness, we have presented Lemma~\ref{lem:psicxHildebrand} in full generality, 
while we only use it for a fixed $c> 1$ and very small (compared to $x$) values of $y$. 
More precisely, we  use  Lemma~\ref{lem:psicxHildebrand} in the following form:  

\begin{lem}
\label{lem:psicx}
Fix real numbers $\alpha, \beta>0$, $c\geq 1$ and $A>1$. Then
$$
    \Psi(c\alpha x^\beta, \log^A x)\sim c^{1-1/A}\Psi(\alpha x^\beta, \log^A x)
$$
as $x\rightarrow\infty$. 
\end{lem}

\begin{proof}
Using Lemma~\ref{lem:psicxHildebrand} with $\alpha x^\beta$ in place of $x$ and $y=\log^A x$, we obtain
$$
    \Psi(cx^\beta, \log^A x) = (1+ o(1))  \Psi(x^\beta, \log^A x)\(1+\frac{\log^A x}{\beta\log x+\log \alpha}\)^{\log c/A\log\log x}.
$$
Finally, 
\begin{align*}
\(1+\frac{\log^A x}{\beta\log x+\log \alpha}\)^{\log c/A\log\log x} & =
\((1/\beta+o(1)) \log^{A-1}x\)^{\log c/A\log\log x} \\
& =  (1+o(1)) \(\log^{A-1}x\)^{\log c/A\log\log x} \:.
\end{align*}
Since $ \( \log^{A-1}x\)^{\log c/A\log\log x} = c^{1-1/A}$, the result follows.
\end{proof}

We also need an asymptotic formula for $\Psi(x, \log^A x)$, see for example~\cite[Equation~(1.14)]{Granville}:

\begin{lem}
\label{lem:psixlogax}
For any fixed $A>1$, we have
$$
    \Psi(x, \log^A x)=x^{1-1/A+o(1)}
$$
as $x\rightarrow\infty$.
\end{lem}
 
Finally, we need an upper bound of Harper~\cite{Har16}
on the number of smooth numbers in an arithmetic progression.  In fact we present it in a very 
special case tailored to our applications. Namely, let $\Psi(x, y; q,a)$ denote the number of $y$-smooth numbers less than or equal to $x$ in the residue class $a$ modulo $q$. 
By~\cite[Smooth Number Result~3, Section~2.1]{Har16} we have the following estimate.

\begin{lem}
\label{lem:psixlogax_aq}
For any fixed $A>1$,  $\beta>0$ and $\varepsilon >0$, we have
$$
    \Psi(x^\beta, \log^A x;q, a)\leq (x^\beta/q)^{1-1/A} x^{o(1)}  
$$
for all $a$ as $q \le x^{\beta-\varepsilon}$ and $x\rightarrow\infty$.
\end{lem}

\subsection{Sums involving binomial coefficients} 
We recall the definition~\eqref{eq:bin entrop}.
We frequently use the following result from~\cite[Chapter~10, Corollary~9]{Sloane}:

\begin{lem}
\label{lem:boundbincoeffs}
For any natural number $n$ and $0<\rho \le 1/2$, we have
$$
    \sum_{0\leq k\leq\rho n}\binom{n}{k}\leq 2^{nH(\rho)}.
$$
\end{lem}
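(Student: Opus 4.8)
The plan is to deduce the inequality from the binomial theorem applied to $\gamma + (1-\gamma) = 1$, using the hypothesis $\gamma \le 1/2$ only through its consequence $\gamma/(1-\gamma) \le 1$. First I would expand
$$
1 = \bigl(\gamma + (1-\gamma)\bigr)^n = \sum_{k=0}^{n} \binom{n}{k} \gamma^k (1-\gamma)^{n-k}
$$
and, since every summand is nonnegative, discard the terms with $k > \gamma n$ to obtain
$$
1 \ge \sum_{0 \le k \le \gamma n} \binom{n}{k} \gamma^k (1-\gamma)^{n-k}.
$$

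The next step is to replace the weight $\gamma^k(1-\gamma)^{n-k}$ by a single lower bound valid for all $k$ in the summation range. Writing $\gamma^k(1-\gamma)^{n-k} = (1-\gamma)^n \bigl(\gamma/(1-\gamma)\bigr)^k$ and using that $\gamma/(1-\gamma) \le 1$ makes $\bigl(\gamma/(1-\gamma)\bigr)^k$ nonincreasing in $k$, for $0 \le k \le \gamma n$ we have
$$
\gamma^k(1-\gamma)^{n-k} \ge (1-\gamma)^n \Bigl(\frac{\gamma}{1-\gamma}\Bigr)^{\gamma n} = \gamma^{\gamma n}(1-\gamma)^{(1-\gamma)n}.
$$
Substituting this back and factoring the constant out of the sum yields
$$
1 \ge \gamma^{\gamma n}(1-\gamma)^{(1-\gamma)n} \sum_{0 \le k \le \gamma n} \binom{n}{k},
$$
so that $\sum_{0 \le k \le \gamma n} \binom{n}{k} \le \gamma^{-\gamma n}(1-\gamma)^{-(1-\gamma)n}$.

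It remains to recognise the right-hand side as $2^{nH(\gamma)}$. Taking base-$2$ logarithms and comparing with the definition~\eqref{eq:bin entrop},
$$
\log_2\Bigl(\gamma^{-\gamma n}(1-\gamma)^{-(1-\gamma)n}\Bigr) = n\Bigl(-\gamma\frac{\log\gamma}{\log 2} - (1-\gamma)\frac{\log(1-\gamma)}{\log 2}\Bigr) = nH(\gamma),
$$
which gives the claim. I do not expect any real obstacle: the only point requiring care is the monotonicity step, where the hypothesis $\gamma \le 1/2$ enters exactly once and is essential, and one should also check that the degenerate situations (such as $\gamma n < 1$, or $k$ ranging over integers when $\gamma n \notin \mathbb{Z}$) cause no difficulty, since the argument only ever uses the bound $k \le \gamma n$ together with $\gamma/(1-\gamma) \le 1$.
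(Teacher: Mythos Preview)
Your argument is correct and is in fact the standard proof of this inequality. The paper does not give its own proof but simply quotes the result from~\cite[Chapter~10, Corollary~9]{Sloane}, so there is nothing to compare; your write-up supplies exactly the short elementary derivation that the cited reference contains.
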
 

Furthermore, we  also need a bound on the product of binomial coefficient or, in other words, on the sum of their logarithms:

\begin{lem}
\label{lem:boundlogbincoeffs}
For any integer $n\geq 2$, we have
$$
\sum_{k=0}^n \log\binom{n}{k}=\frac{1}{2}n^2+O(n\log n).
$$
\end{lem}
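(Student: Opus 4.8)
The plan is to evaluate the sum $S_n := \sum_{k=0}^n \log\binom{n}{k}$ by writing $\log\binom{n}{k} = \log n! - \log k! - \log(n-k)!$ and then applying Stirling's formula $\log m! = m\log m - m + O(\log(m+1))$ to each factorial. Summing over $k$, the term $\log n!$ contributes $(n+1)\log n! = n^2\log n - n^2 + O(n\log n)$, while $\sum_{k=0}^n \log k!$ and $\sum_{k=0}^n \log(n-k)!$ are equal and each needs to be estimated; call this common value $T_n = \sum_{k=0}^n \log k!$. So the main task reduces to showing $T_n = \tfrac12 n^2\log n - \tfrac34 n^2 + O(n\log n)$, after which $S_n = (n^2\log n - n^2) - 2T_n + O(n\log n) = \tfrac12 n^2 + O(n\log n)$ falls out.

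To estimate $T_n$, I would again use Stirling inside: $T_n = \sum_{k=1}^n \bigl(k\log k - k + O(\log(k+1))\bigr)$. The error terms sum to $O(n\log n)$. For the main terms, $\sum_{k=1}^n k = \tfrac12 n^2 + O(n)$, and $\sum_{k=1}^n k\log k$ I would compare to the integral $\int_1^n x\log x\,dx = \tfrac12 n^2\log n - \tfrac14 n^2 + O(n\log n)$ via Euler--Maclaurin or a direct monotonicity argument (the function $x\log x$ is increasing on $[1,\infty)$, so the sum and integral differ by $O(n\log n)$). This gives $T_n = \tfrac12 n^2\log n - \tfrac14 n^2 - \tfrac12 n^2 + O(n\log n) = \tfrac12 n^2\log n - \tfrac34 n^2 + O(n\log n)$, as needed.

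An alternative, essentially equivalent route avoids factorials entirely: write $\log\binom{n}{k} = \sum_{j=1}^{k}\log\frac{n-j+1}{j} = \sum_{j=1}^{k}\bigl(\log(n-j+1) - \log j\bigr)$, so $S_n = \sum_{k=0}^n\sum_{j=1}^k \log\frac{n-j+1}{j}$, and swapping the order of summation turns this into $\sum_{j=1}^n (n-j+1)\log\frac{n-j+1}{j}$; replacing each logarithm by an integral and estimating the resulting double sum by a two-dimensional integral $\iint_{i+j\le n} \log(i/j)$ or similar again yields the leading constant $\tfrac12$. I expect the only real obstacle to be bookkeeping: keeping the $\log n$ and $n$ lower-order contributions lumped cleanly into the $O(n\log n)$ error while correctly tracking the cancellation among the $n^2\log n$ and $n^2$ terms, so that the spurious $n^2\log n$ pieces vanish and precisely $\tfrac12 n^2$ survives. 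No deep input is required beyond Stirling's formula and comparison of a sum with an integral.
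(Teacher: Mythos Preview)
Your proposal is correct and essentially matches the paper's argument: the paper likewise applies Stirling to write $\log\binom{n}{k} = n\log n - k\log k - (n-k)\log(n-k) + O(\log n)$, sums over $k$, and reduces everything to estimating $\sum_{k=1}^n k\log k = \tfrac12 n^2\log n - \tfrac14 n^2 + O(n\log n)$ (done there by partial summation rather than integral comparison, which is cosmetic). Your bookkeeping via $T_n=\sum_k \log k!$ is just a regrouping of the same computation.
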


\begin{proof}
By the Stirling formula, we have $\log n!=n\log n-n+O(\log n)$ for $n\geq 2$ and therefore
$$
\log\binom{n}{k}=n\log n-k\log k-(n-k)\log (n-k)+O(\log n)
$$
whenever $2\leq k\leq n-2$ and $n\geq 2$. Summing over all $k$, we see that
$$
\sum_{k=0}^n \log\binom{n}{k}=n^2\log n-2\sum_{k=1}^n k\log k+O(n\log n).
$$
For the remaining sum, partial summation yields
\begin{align*}
\sum_{k=1}^n k\log k&=\frac{n(n+1)}{2}\log n-\int_1^n \frac{\lfloor t\rfloor(\lfloor t\rfloor+1)}{2t}\,\mathrm{d}t \\
&=\frac{1}{2}n^2\log n-\int_1^n \frac{t}{2}\,\mathrm{d} t+O(n\log n) \\
&=\frac{1}{2}n^2\log n-\frac{1}{4}n^2+O(n\log n)
\end{align*}
and hence we obtain
$$
\sum_{k=0}^n \log\binom{n}{k}=\frac{1}{2}n^2+O(n\log n),
$$
as claimed.
\end{proof}

\subsection{Character sums modulo powers of $2$} 
From~\cite[Theorem 2.1]{BaSh}, we deduce the following estimate for character sums modulo powers of $2$:
\begin{lem}
\label{lem:charsums}
There exists an effective constant $\xi>0$ such that for all integers $k\geq 1$ and all non-principal characters $\chi$ modulo $q=2^k$, we have 
$$
\sum_{n=M+1}^{M+N} \chi(n)\ll N^{1-\xi\ell^2/k^2}
$$  
uniformly for all integers $M$ and $N=2^\ell$, where $\ell$ is an integer such that $\ell\leq k$, where implied constant is absolute.
\end{lem}

\begin{proof}
Assume $\chi$ is induced by the primitive character $\widetilde \chi$ modulo $\widetilde q=2^{k_0}$ with $k_0\geq 1$ and let $1\leq \widetilde N\leq \widetilde q$ such that $N\equiv \widetilde N\pmod{\widetilde q}$, that is, $\widetilde N=2^{\ell_0}$ with $\ell_0=\ell$ if $\ell\leq k_0$ and $\ell_0=k_0$ otherwise. Then 
$$
\sum_{n=M+1}^{M+N} \chi(n)=\sum_{n=M+1}^{M+N} \widetilde \chi(n)=\sum_{n=M+1}^{M+\widetilde N} \widetilde \chi(n)
$$
since complete sums of characters (of length $\widetilde q$ in this case) vanish.  
By~\cite[Theorem~2.1]{BaSh}, there is an effective constant $\xi_0>0$ such that
$$
\sum_{n=M+1}^{M+\widetilde N} \widetilde \chi(n)\ll {\widetilde N}^{1-\xi_0{\ell_0}^2/{k_0^2}}\;
$$
and the implied constant is absolute. Put $\xi=\min\{1/2, \xi_0\}$.

Assuming first that $\ell_0=\ell$, then clearly ${\widetilde N}^{1-\xi{\ell_0}^2/{k_0^2}}\leq N^{1-\xi\ell^2/k^2}$, so we are done. Now assume that $k_0<\ell$ and $\ell_0=k_0$. Then we have
$$
k_0(1-\xi)\leq k_0(1-\xi\ell^2/k^2) < \ell(1-\xi\ell^2/k^2)
$$
and therefore once again ${\widetilde N}^{1-\xi{\ell_0}^2/{k_0^2}}\leq N^{1-\xi\ell^2/k^2}$. 
\comment{
Finally, we are left with $\ell_0<c_0$. Then
$$
\sum_{n=M+1}^{M+\widetilde N} \widetilde \chi(n)\ll 2^{c_0}
$$
by the triangle inequality and due to $\ell(1-\xi\ell^2/k^2)\geq \ell(1-\xi)\geq c(1-\xi)=c_0$, this is again at most $N^{1-\xi\ell^2/k^2}$.
}
\end{proof}

\subsection{Smooth numbers with some bits prescribed} 
Using techniques from~\cite{ShparlinskiCharacterSums} and modifying the proof 
of~\cite[Theorem~6]{ShparlinskiSmooth}, we prove the following

\begin{lem}
\label{lem:prescribedbits} 
Let $A>1$ and $\varepsilon>0$ be fixed real numbers. with 
$2\varepsilon < 1-1/A$. Then for sufficiently large $n$ and any binary string $\sigma$ of length 
$$
m\leq n_0\(1-\(\frac{\(1-1/A\)\(1/A+2\varepsilon\)}{\xi\(1-1/A-2\varepsilon\)}\)^{1/3}\)\;,
$$
where $n_0=\lfloor(1/2-1/2A-\varepsilon)n\rfloor$ and $\xi$ is the constant from Lemma~\ref{lem:charsums}, there exist at least  
$2^{n(1-1/A)-m+o(n)}$ odd 
$n$-bit integers  which are $n^A$-smooth and have the bit pattern $\sigma$ at the positions $n_0-1, \dots, n_0-m$.
\end{lem}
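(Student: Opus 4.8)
The plan is to count these integers by harmonic analysis over the digit window, thereby reducing the claim to a cancellation estimate for linear exponential sums over smooth numbers to a power-of-two modulus; this last estimate, obtained by the divisor/bilinear method of~\cite[Theorem~6]{ShparlinskiSmooth} together with the short character sum input of~\cite{IwaniecLSeries} exploited as in~\cite{ShparlinskiCharacterSums}, is the heart of the matter.

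First, set $x=2^{n}$, $y=n^{A}$, and let $\cS$ be the set of odd $y$-smooth integers with $2^{n-1}\le s<2^{n}$. Since $n^{A}=(\log 2^{n})^{A+o(1)}$, Lemmas~\ref{lem:psicx} and~\ref{lem:psixlogax} give $\Psi(2^{n},y)=2^{n(1-1/A)+o(n)}$; isolating the dyadic block $[2^{n-1},2^{n})$ and the even smooth numbers in it — each a fixed positive proportion, again by Lemma~\ref{lem:psicx} with $c=2$ — yields $\#\cS=2^{n(1-1/A)+o(n)}$. Next, $s$ carries the prescribed pattern $\sigma$ at the positions $n_{0}-1,\dots,n_{0}-m$ precisely when $s\bmod 2^{n_{0}}$ lies in the interval $I_{\sigma}=[\,t2^{n_{0}-m},(t+1)2^{n_{0}-m})$, where $t<2^{m}$ is the integer with binary expansion $\sigma$. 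Since for large $n$ the window avoids both the (fixed) bottom bit and the (fixed) top bit, it is enough to bound from below
$$
N(\sigma):=\#\{s\in\cS:\ s\bmod 2^{n_{0}}\in I_{\sigma}\}
$$
by $2^{n(1-1/A)-m+o(n)}$.

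Second, detect the condition "$s\bmod 2^{n_{0}}\in I_{\sigma}$" with additive characters modulo $2^{n_{0}}$. The main term is $(\#I_{\sigma}/2^{n_{0}})\#\cS=2^{-m}\#\cS=2^{n(1-1/A)-m+o(n)}$, exactly the target. The Fourier coefficients of the indicator of $I_{\sigma}$ vanish at every nonzero multiple of $2^{m}$, so only frequencies $j=2^{\beta}j'$ with $j'$ odd and $0\le\beta\le m-1$ contribute; writing $\exp(2\pi i js/2^{n_{0}})=\exp(2\pi i j's/2^{\,n_{0}-\beta})$ and using $\sum_{j}|\widehat{\mathbf 1}_{I_{\sigma}}(j)|\ll 2^{n_{0}}\log x$, grouped by $\beta$, one obtains
$$
\bigl|N(\sigma)-2^{-m}\#\cS\bigr|\ \ll\ n\sum_{\beta=0}^{m-1}2^{-\beta}\max_{\gcd(a,2)=1}\Bigl|\sum_{s\in\cS}\exp\!\bigl(2\pi i\,as/2^{\,n_{0}-\beta}\bigr)\Bigr| .
$$
Here the modulus $2^{n_{0}-\beta}$ always satisfies $n^{3/4}\log n\le n_{0}-\beta\le n_{0}=\lfloor(\tfrac12-\tfrac1{2A}-\varepsilon)n\rfloor$, so it is large yet, by the choice of $n_{0}$, bounded by $(\#\cS)^{1/2}2^{-\varepsilon n+o(n)}$. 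Granting the key estimate
$$
\max_{\gcd(a,2)=1}\Bigl|\sum_{s\in\cS}\exp\bigl(2\pi i\,as/2^{b}\bigr)\Bigr|\le(\#\cS)^{1/2+o(1)}\qquad\text{for }n^{3/4}\log n\le b\le n_{0},
$$
the error is $\ll n^{O(1)}(\#\cS)^{1/2+o(1)}$, which is $o(2^{-m}\#\cS)$ since $m\le n_{0}-n^{3/4}\log n$ is smaller than $\tfrac12\log_{2}\#\cS$ by a fixed proportion of $n$. Hence $N(\sigma)=(1+o(1))2^{-m}\#\cS\ge 2^{n(1-1/A)-m+o(n)}$, as required.

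Third — and this is the main obstacle — establish the key exponential sum estimate, following~\cite[Theorem~6]{ShparlinskiSmooth}. Every $y$-smooth $s$ has, for a target $T$ to be chosen, a divisor $u\mid s$ with $T\le u<Ty$, since consecutive divisors of $s$ have ratio at most $y$; writing $s=uv$, restricting $u$ and $v$ to dyadic blocks, and controlling the multiplicity of the factorisation by the divisor bound of Lemma~\ref{lem:divisorbound}, turns $\sum_{s\in\cS}\exp(2\pi i\,as/2^{b})$ into a bilinear sum in which $u$ runs over $y$-smooth integers near $T$ while, for each $u$, $v$ runs over $y$-smooth integers in an interval of length about $x/T$. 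Choosing $T$ to be a small fixed power of $x$ — admissible because $b\le n_{0}<\tfrac12(1-1/A)\log_{2}x$ — simultaneously keeps the $u$-range small and makes the $v$-range a fixed power of $x$ longer than the modulus $2^{b}$, which is exactly what is needed to apply the bound for additive character sums over smooth numbers in an interval to a power-of-two modulus from~\cite{ShparlinskiCharacterSums}, itself resting on~\cite{IwaniecLSeries}, with a genuine power saving; summing over the $u$-range trivially then produces the bound $(\#\cS)^{1/2+o(1)}$. The hypothesis $m\le n_{0}-n^{3/4}\log n$ is used precisely here: it forces the moduli $2^{b}$ occurring in the error term to be large enough for that smooth-number character sum bound to be available in full strength, while the defining relation for $n_{0}$ keeps them below $(\#\cS)^{1/2}$, so that square-root cancellation is both the right target and attainable. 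Only the choice of $T$ and the bookkeeping of the $2^{o(n)}$ factors need to be carried out in detail.
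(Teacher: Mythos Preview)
Your approach diverges from the paper's in a way that creates a genuine gap. The paper does \emph{not} work with the set $\cS$ of smooth $n$-bit integers and additive characters. Instead it takes $\cW$ to be the odd $n^{A}$-smooth integers in $(2^{(n-1)/2},2^{n/2}]$, considers \emph{products} $w_{1}w_{2}$ with $w_{1},w_{2}\in\cW$, and detects the residue class modulo $2^{n_{0}}$ with \emph{multiplicative} characters. The point of this device is that the dangerous sum over smooth numbers then appears only through $\sum_{\chi}\bigl|\sum_{w\in\cW}\chi(w)\bigr|^{2}$, which is handled \emph{trivially} by orthogonality; the sole nontrivial input is Iwaniec's bound~\cite[Lemma~6]{IwaniecLSeries} for the short sum $\sum_{k}\chi(2^{n_{0}-m}s+k)$ over an \emph{interval}, and this is exactly where the hypothesis $m\le n_{0}-n^{3/4}\log n$ enters. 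No estimate for character sums over smooth numbers is ever needed.

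Your route, by contrast, commits you to the ``key estimate''
\[
\max_{\gcd(a,2)=1}\Bigl|\sum_{s\in\cS}\exp\bigl(2\pi i\,as/2^{b}\bigr)\Bigr|\le(\#\cS)^{1/2+o(1)},
\]
an additive exponential sum over smooth numbers. The sketch you give for it does not close. First, neither~\cite{ShparlinskiCharacterSums} nor~\cite{IwaniecLSeries} contains a bound for additive character sums over smooth integers in an interval; both concern \emph{multiplicative} characters, and~\cite{ShparlinskiCharacterSums} uses precisely the product-and-orthogonality trick above rather than any such bound. Second, the bilinear step as you describe it is circular and numerically insufficient: after writing $s=uv$ with $u\sim T$, fixing $u$ leaves the inner sum still running over smooth $v$, so it is the same problem; and even granting square-root cancellation in $v$ and summing trivially over $u$ yields roughly $T^{1-1/A}(x/T)^{(1-1/A)/2}=(\#\cS)^{1/2}T^{(1-1/A)/2}$, which exceeds your target $(\#\cS)^{1/2+o(1)}$ for any $T$ that is a fixed power of $x$. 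The additive framework also destroys the multiplicativity $\chi(uv)=\chi(u)\chi(v)$ that makes the paper's second-moment argument work. In short, the whole architecture of the lemma is built so that one never has to bound a character sum over smooth numbers; your reduction to exactly such a bound is the missing idea, not a routine step.
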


\begin{proof}
Let $\mathcal{W}$ be the set of odd $n^A$-smooth numbers in the interval $(2^{(n-1)/2}, 2^{n/2}]$ and let $s$ denote the number defined by the binary string $\sigma$. We count the number $T(k)$ of solutions $w_1, w_2\in\mathcal{W}$ of $w_1w_2\equiv 2^{n_0-m}s+k\pmod{2^{n_0}}$ for $0\leq k<2^{n_0-m}$. Then the product $w_1w_2$ is $n^A$-smooth and has $n$ bits as well as the desired bit pattern.

Let $\mathcal{X}$ be the set of multiplicative characters modulo $2^{n_0}$. As in~\cite{ShparlinskiCharacterSums}, recalling the orthogonality relation
\begin{equation}
\label{eq:orthrels}
    \sum_{\chi\in\mathcal{X}} \chi(u)=\begin{cases} 0 & \text{if $u\not\equiv 1\pmod{2^{n_0}},$} \\
    2^{n_0-1} & \text{if $u\equiv 1\pmod{2^{n_0}}$,}
    \end{cases}
\end{equation}
see, for example,~\cite[Section~3.2]{IwaniecAnalyticNT}, we can express $T(k)$ as
$$
    T(k)=\frac{1}{2^{n_0-1}}\sum_{w_1,w_2\in\mathcal{W}}\sum_{\chi\in\mathcal{X}} \chi\((2^{n_0-m}s+k)w_1^{-1}w_2^{-1}\),
$$
where the inverses are taken modulo $2^{n_0}$ (recall that $w_1, w_2\in\mathcal{W}$ are odd). Also observe that $T(k)=0$ if $k$ is even.

If $k$ is odd, separating the contribution from the principal character  $\chi_0$ and changing the order of summation, we obtain
\begin{align*}
    T(k) &= \frac{(\#\mathcal{W})^2}{2^{n_0-1}}+\frac{1}{2^{n_0-1}}\sum_{\substack{\chi\in\mathcal{X} \\ \chi\neq \chi_0}} \chi(2^{n_0-m}s+k) \sum_{w_1,w_2\in\mathcal{W}} \chi\(w_1^{-1}w_2^{-1}\) \\
    &= \frac{(\#\mathcal{W})^2}{2^{n_0-1}} + \frac{1}{2^{n_0-1}}\sum_{\substack{\chi\in\mathcal{X} \\ \chi\neq \chi_0}}  \chi(2^{n_0-m}s+k) 
    \(\sum_{w\in\mathcal{W}} \chi\(w^{-1}\)\)^2 \;.
\end{align*} 
This yields
\begin{equation}
\label{eq:totalnumsols}
    \sum_{k=0}^{2^{n_0-m}-1} T(k)=\frac{(\#\mathcal{W})^2}{2^m}+\Delta\;,
\end{equation}
where, using $ \chi\(w^{-1}\) = \overline  \chi(w)$,   the error term $\Delta$ is given by
$$
    \Delta=\frac{1}{2^{n_0-1}}\sum_{\substack{\chi\in\mathcal{X} \\ \chi\neq \chi_0}}\sum_{k=0}^{2^{n_0-m}-1} \chi(2^{n_0-m}s+k)\(\sum_{w\in\mathcal{W}} \overline{\chi}(w)\)^2\;.
$$

Now we estimate $\Delta$, again following~\cite{ShparlinskiCharacterSums}. Namely, we have
$$
    |\Delta|\leq \frac{1}{2^{n_0-1}}\sum_{\substack{\chi\in\mathcal{X} \\ \chi\neq \chi_0}} \left|\sum_{k=0}^{2^{n_0-m}-1} \chi(2^{n_0-m}s+k)\right|\left|\sum_{w\in\mathcal{W}} \chi(w)\right|^2
$$
by the triangle inequality.
By Lemma~\ref{lem:charsums}, we have the estimate 
$$
    \sum_{k=0}^{2^{n_0-m}-1}\chi(2^{n_0-m}s+k)\ll 2^{(n_0-m)(1-\xi(n_0-m)^2/n_0^2)}
$$
for any non-principal character $\chi\in\mathcal{X}$ and $n_0$ sufficiently large. Moreover, by our choice of $m$ and $n_0$, we may further estimate
\begin{align*}
n_0-m& \geq \frac{n_0}{\xi^{1/3}}\left(\frac{\(1-1/A\)\(1/A+2\varepsilon\)}{1-1/A-2\varepsilon}\right)^{1/3} \\
&=\frac{n_0}{\xi^{1/3}}\left(\frac{1}{1-1/A-2\varepsilon}-1\right)^{1/3}(1-1/A)^{1/3} \\
&\geq\frac{n_0}{\xi^{1/3}}\left(\frac{n}{2n_0}-1+o(1)\right)^{1/3}(1-1/A)^{1/3}\\
&=\frac{n_0^{2/3}}{\xi^{1/3}}(n/2-n_0)^{1/3}(1-1/A)^{1/3}+o(n)
\end{align*} 
and therefore the estimate above becomes
$$
    \sum_{k=0}^{2^{n_0-m}-1}\chi(2^{n_0-m}s+k)\ll 2^{n_0-m-(n/2-n_0)(1-1/A)+o(n)}\;.
$$

Thus, we can further estimate
\begin{align*}
    |\Delta|&\ll 2^{-m-(n/2-n_0)(1-1/A)+o(n)}\sum_{\substack{\chi\in\mathcal{X} \\ \chi\neq \chi_0}} \left|\sum_{w\in\mathcal{W}} \chi(w)\right|^2\\ 
    &\ll 2^{-m-(n/2-n_0)(1-1/A)+o(n)}\sum_{\chi\in\mathcal{X}} \left|\sum_{w\in\mathcal{W}} \chi(w)\right|^2 \\
    &= 2^{-m-(n/2-n_0)(1-1/A)+o(n)}\sum_{\chi \in \mathcal{X}} \sum_{w_1,w_2 \in \mathcal{W}} \chi(w_1w_2^{-1})\;.
\end{align*}
 To estimate the last sum, we  change the order of summation and use the orthogonality relations~\eqref{eq:orthrels}. Namely, by Lemma~\ref{lem:psixlogax_aq}, there are at most $2^{(n/2-n_0)(1-1/A)+o(n)}$ elements of $\cW$ in any given residue class modulo $2^{n_0}$, so we obtain
\begin{align*}
 \sum_{w_1,w_2 \in \mathcal{W}}\sum_{\chi \in \mathcal{X}} \chi(w_1w_2^{-1})&=2^{n_0-1}\cdot\#\{(w_1, w_2)\in\mathcal{W}^2: w_1\equiv w_2\pmod{2^{n_0}}\} \\
 & \leq 2^{n_0+(n/2-n_0)(1-1/A)+o(n)} \#\mathcal{W}\;.
\end{align*}
Therefore, we overall get
$$
|\Delta|\ll 2^{n_0-m+o(n)}\#\mathcal{W}\;.
$$

To compare the error term with the main term, we need to determine $\#\cW$. To do this, observe that for any $x$, there is a bijection
\begin{align*}
\{w\in [1, x/2]:~w\text{ $n^A$-smooth}\}&\overset{1\,:\,1}{\longleftrightarrow}\{w\in [1, x]:~w\text{ even, $n^A$-smooth}\} \\
w&\mapsto 2w
\end{align*}
and thus the number of odd $n^A$-smooth numbers up to $x$ is given by $\Psi(x, n^A)-\Psi(x/2, n^A)$. Therefore, Lemma~\ref{lem:psicx} yields
\begin{align*}
    \#\mathcal{W}&=\left(\Psi(2^{n/2}, n^A)-\Psi(2^{n/2-1}, n^A)\right) \\
    &\hspace{2cm}-\left(\Psi(2^{(n-1)/2}, n^A)-\Psi(2^{(n-1)/2-1}, n^A)\right) \\
    &\sim \Psi(2^{(n-1)/2-1}, n^A)\left(2^{3(1-1/A)/2}-2^{1-1/A}-2^{(1-1/A)/2}+1\right)
\end{align*}
and due to
$$
    2^{3(1-1/A)/2}-2^{1-1/A}-2^{(1-1/A)/2}+1=(2^{(1-1/A)/2}-1)^2(2^{(1-1/A)/2}+1)\;,
$$
this is a positive proportion of $\Psi(2^{(n-1)/2-1}, n^A)$. Moreover, according to  Lemma~\ref{lem:psixlogax},
\begin{align*}
    \Psi(2^{(n-1)/2-1}, n^A)& \geq \Psi(2^{(n-3)/2}, (n-3)^A\log^A 2/2^A)\\
    & =  2^{(n-3)(1-1/A+o(1))/2}=2^{n(1-1/A+o(1))/2}\;.
\end{align*}
Combining both results, we deduce that
\begin{equation}
\label{eq:boundW}
    \#\mathcal{W}\geq 2^{n(1-1/A+o(1))/2}\;.
\end{equation}

Therefore, the ratio of the error term to the main term in~\eqref{eq:totalnumsols} can be bounded by
\begin{align*}
    \frac{|\Delta|}{(\#\mathcal{W})^2/2^m}&\ll \frac{2^{n_0-m+o(n)}\#\mathcal{W}}{(\#\mathcal{W})^2/2^m}=\frac{2^{n_0+o(n)}}{\#\mathcal{W}}\leq \frac{2^{n_0+o(n)}}{2^{n(1-1/A)/2}} \\
    &\leq \frac{2^{n(1-1/A-2\varepsilon)/2+o(n)}}{2^{n(1-1/A)/2}}=2^{-\varepsilon n +o(n)}=o(1)\;.
\end{align*}
Thus, putting this result back into~\eqref{eq:totalnumsols}, we obtain
$$
    \sum_{k=0}^{2^{n_0-m}-1} T(k)=\frac{(\#\mathcal{W})^2}{2^m}(1+o(1))\;.
$$
Using~\eqref{eq:boundW} again, we see that this becomes
$$
    \sum_{k=0}^{2^{n_0-m}-1} T(k)\geq \frac{2^{n(1-1/A+o(1))}}{2^m}(1+o(1))=\frac{2^{n(1-1/A+o(1))}}{2^m}\;.
$$

To conclude, it still remains to check how many pairs $(w_1, w_2)$ yield the same product $w=w_1w_2$. However, any such product $w$ satisfies $w\leq 2^n$ by construction and hence, by  Lemma~\ref{lem:divisorbound}, it can occur at most $2^{o(n)}$ 
times.
 Therefore, the number of pairwise distinct products is at least
$$
 2^{o(n)}   \sum_{k=0}^{2^{n_0-m}-1} T(k) \geq \frac{2^{n(1-1/A+o(1))}}{2^m}\;,
$$
as claimed. 
\end{proof}

\comment{
\begin{rem}
\label{rem:odd}
Note that all the smooth numbers constructed in the proof are odd, hence the statement of the theorem still holds true if we confine ourselves to odd smooth numbers.
\end{rem}
}
 
\comment{
\begin{rem}
\label{rem:improvement}
Note that using the given in~\cite{BaSh} improvement of the previous bound of Iwaniec~\cite[Lemma~6]{IwaniecLSeries}, 
one can replace $n^{3/4}\log n$ with $Cn^{2/3}$ for some absolute constant $C$
in the restriction on $m$ of Lemma~\ref{lem:prescribedbits}.
\end{rem} 
}

\section{Proof of Theorem~\ref{thm:sparse1}}  
\label{sec: proof T.1.1} 
Let $\varepsilon>0$. 
We set
$$
    n_0=\fl{(1/2-1/2A-\varepsilon)n} \mand     m=\fl{n_0\(1-\(\frac{\(1-1/A\)\(1/A+2\varepsilon\)}{\xi\(1-1/A-2\varepsilon\)}\)^{1/3}\)}\;.
$$
It is also convenient to define 
$$
\mu = m/n
$$
and note that there are some constants $c_2>  c_1>0$ depending only on $A$, but not on $\varepsilon$ 
such that for a sufficiently small $\varepsilon$,
$$
 \(\xi A\)^{-1/3}
+c_1\varepsilon \le 
 \(\frac{\(1-1/A\)\(1/A+2\varepsilon\)}{\xi\(1-1/A-2\varepsilon\)}\)^{1/3} \le \(\xi A\)^{-1/3} 
+c_2\varepsilon. 
$$
Note that the positivity of $c_1$ is crucial for us and in particular, this implies that 
for some constants $C_2>  C_1>0$ depending only on $A$, we have 
\begin{equation}
\label{eq:mu mu0}
 \mu_0(A) - C_2\varepsilon\le  \mu  
 \le \mu_0(A) -C_1\varepsilon , 
\end{equation}
where $\mu_0(A)$ is given by~\eqref{eq:mu 0} with $\zeta = \xi^{-1/3}$. 

Applying  Lemma~\ref{lem:prescribedbits} with the above values of $n_0$ and $m$, 
we see that the number $T$ of odd $n$-bit $n^A$-smooth numbers with a string of $m$ zeros at the positions $n_0-1, \dots, n_0-m$ is at least
\begin{equation}
\label{eq:boundN}
    T\geq\frac{2^{n(1-1/A+o(1))}}{2^m}=2^{(n-m) (1-1/(1-\mu)A)+o(n)}  . 
\end{equation}

Now consider the $n-m$ bits we have not prescribed yet. If at most a proportion $\rho<\vartheta_0(A)$ of them are zeros, where $\vartheta_0(A)$ is given by~\eqref{eq:theta 0}, then, by  
Lemma~\ref{lem:boundbincoeffs}, we can have at most 
$$
    \sum_{0\leq k\leq\rho(n-m)} \binom{n-m}{k}\leq 2^{(n-m)H(\rho)}
$$
different integers. However, since $\rho<\vartheta_0(A)$, we know that
$$
	H(\rho)< 1-\frac{1}{(1-\mu_0(A) )A } <1-\frac{1}{(1-\mu )A }.
$$
Recalling~\eqref{eq:boundN}, we conclude that  
$$
2^{(n-m)H(\rho)} < T
$$
if $n$ is  sufficiently large.

Thus, at least one of the $T$ odd $n$-bit $n^A$-smooth numbers not only has a string of $m$ consecutive zeros, which is by construction, but also has at least a proportion of $\rho$ zeros among the remaining $n-m$ digits. That is,
we see from~\eqref{eq:mu mu0}  that  its binary expansion contains at least
\begin{align*}
    m+\rho(n-m)& = (1-\rho)m +\rho n = n\(\mu (1-\rho)+ \rho\)\\
       & \ge n\(\mu_0(A) (1-\rho)+\rho - C_2 (1-\rho) \varepsilon\)\\
    & \ge n\(\mu_0(A) + \rho(1-\mu_0(A)) - C_2 (1-\rho) \varepsilon\)
\end{align*}
zeros. Choosing $\rho>\vartheta$ concludes the proof since $\varepsilon$ is arbitrarily small.

\section{Proof of Theorem~\ref{thm:sparse2}}

Put $k=p_2\cdots p_r$ for some $r>2$ and consider $m=2^k+1$.
We assume that $r \rightarrow \infty$, so we also have $k \rightarrow \infty$. 

We have the following factorisation into cyclotomic polynomials:
$$
m=2^k+1=-\prod_{d\mid k} \Phi_d(-2)
$$
and we can bound the factors using Lemma~\ref{lem:cyclotomiccoeffs} and Lemma~\ref{lem:divisorbound} as
\begin{align*}
|\Phi_d(-2)|&\ll A_d 2^{\varphi(d)}\leq \exp\left(\frac{1}{2}\tau(d)\log d\right)2^{\varphi(d)} \\
&\ll \exp\left(k^{o(1)}\log k\right)2^{\varphi(d)}\ll 2^{\varphi(d)+k^{o(1)}}\leq 2^{\varphi(k)+k^{o(1)}}.
\end{align*}
Moreover, due to our choice of $k$, Lemma~\ref{lem:boundphi} implies 
$$
|\Phi_d(-2)|\ll 2^{( 2e^{-\gamma}+o(1))k/\log \log k}<m^{( 2e^{-\gamma}+o(1))/\log \log k}=m^{( 2e^{-\gamma}+o(1))/\log\log \log m}.
$$
In particular, $m$ is $m^{(2e^{-\gamma}+o(1))/\log\log\log m}$-smooth.

Now, we consider the number 
\begin{equation}
\label{eq:def N}
N=m^{\ell},
\end{equation} 
where $\ell=\lfloor \beta k\rfloor$ for $\beta=2\alpha\log 2$. 
Without loss of generality we assume that $k$ is  large enough so $\ell \ge 2$.
Thus $N$ has 
\begin{equation}
\label{eq:ell and n}
n=k\ell+O(\ell)=\beta^{-1} \ell^2+O(\ell) 
\end{equation} 
bits, provided that $k \rightarrow \infty$.
Due to
$$
\ell\sim\beta k\sim \frac{\beta\log m}{\log 2}=\frac{2\alpha\log N}{\ell} 
$$
we also  have $\ell\sim (2\alpha\log N)^{1/2}$.
In particular,
\begin{align*}
\log \log \log N & = \log \log\(\ell   \log m\)  \le   \log \log \( \(2 \log m\)^2\) \\
& =  \log\( 2 \(\log \log m + O(1) \)\) = \(1+o(1)\)\log \log \log m.
\end{align*}
%
Therefore, we see that
\begin{align*}
m^{( 2e^{-\gamma}+o(1))/\log\log\log m}&=N^{( 2e^{-\gamma}+o(1))/(\ell \log\log \log N)} \\
&=N^{\(2e^{-\gamma}+o(1)\)/\((2\alpha\log N)^{1/2}\log\log \log N\)} = Y^{1+o(1)}, 
\end{align*}
that is, $N$ is $Y^{1+o(1)}$-smooth.

We now 
estimate the sparsity of $N$.
 By the binomial theorem, we know that
$$
N=(2^k+1)^\ell=\sum_{j=0}^\ell \binom{\ell}{j}2^{kj}.
$$
Noting that $\binom{\ell}{j}$ can have at most $\log\binom{\ell}{j}/\log 2+1$ binary digits and using the subadditivity of the sum of binary digits guaranteed by Lemma~\ref{lem:subadditive}, we see that Lemma~\ref{lem:boundlogbincoeffs} shows that the total number of non-zero digits of $N$ is at most
\begin{align*}
\frac{1}{\log 2}\sum_{j=0}^\ell \log\binom{\ell}{j}+(\ell+1)&=\frac{1}{2\log 2}\ell^2+O(\ell\log\ell) \\
&=\alpha n+O\(n^{1/2}\log n\),
\end{align*}
where we used that 
$\ell^2=\beta n+O\(n^{1/2}\)$  
due to~\eqref{eq:ell and n} in the last step. 
This proves the claim.

\section{Proof of Theorem~\ref{thm:sparse3}}

We proceed exactly as in the proof of Theorem~\ref{thm:sparse2} up to the point where we defined $N$ in~\eqref{eq:def N}. If $\alpha=0$, we choose $\ell=1$ and are done; otherwise, let $\ell=\lfloor k^\beta\rfloor$ for 
$$
\beta=\frac{\alpha}{2-\alpha}.
$$

Without loss of generality we assume that $k$ is  large enough so $\ell \ge 2$.
Thus $N$ has 
\begin{equation}
\label{eq:ell and n no2}
n=k\ell+O(\ell)=\ell^{1+1/\beta}+O\(\ell^{1/\beta}\) 
\end{equation}  
bits, provided that $k \rightarrow \infty$.
Due to
$$
\ell\sim k^\beta \sim \(\frac{\log m}{\log 2}\)^\beta= \(\frac{\log N}{\ell\log 2} \)^\beta
$$
we also  have $\ell\sim (\log N/\log 2)^{\beta/(1+\beta)}$.
In particular,
\begin{align*}
\log \log \log N & = \log \log\(\ell   \log m\)  \le   \log \log \( \(2 \log m\)^{1+\beta}\) \\
& =  \log\( (1+\beta) \(\log \log m + O(1) \)\) \\
&= \(1+o(1)\)\log \log \log m.
\end{align*}
%
Therefore, we see that
\begin{align*}
m^{( 2e^{-\gamma}+o(1))/\log\log\log m}&=N^{( 2e^{-\gamma}+o(1))/(\ell \log\log \log N)} \\
&=N^{\( 2e^{-\gamma}+o(1)\)/\((\log N/\log 2)^{\beta/(1+\beta)}\log\log \log N\)} = Y^{1+o(1)}
\end{align*}
that is, $N$ is $Y^{1+o(1)}$-smooth.  

As before, we see that the number of non-zero digits of $N$ is at most  
\begin{align*}
\frac{1}{2\log 2}\ell^2+O(\ell\log\ell)&=\frac{1}{2\log 2}n^{2\beta/(1+\beta)}+O(n^{\beta/(1+\beta)}\log n) \\
&=\frac{1}{2\log 2}n^\alpha+O(n^{\alpha/2}\log n)
\end{align*}
due to $\ell=n^{\beta/(1+\beta)}+O(1)$ by~\eqref{eq:ell and n no2} and this concludes the proof.


 \section{Comments}
 
 It is also interesting to study smooth values amongst balanced integers, that is, positive integers with equally many ones and zeros in their binary expansion.
 
 Using simple counting arguments, one can show that for any integer $n$, there exist $2n$-bit balanced 
 integers which are $Y$-smooth, where 
\begin{equation}
\label{eq:balanced}
Y = 2^{2n -  \(1/\sqrt{\pi} + o(1)\)n^{1/2}}. 
\end{equation}
Indeed, first we observe that by the Stirling formula, the number of  $2n$-bit balanced  integers is given by
\begin{equation}
\label{eq:mid binom}
\binom{2n-1}{n-1} =  \(1/2\sqrt{\pi} + o(1)\)\frac{4^n}{n^{1/2}}.
\end{equation}
 However, for any $u\in [1,2]$,
 $$
 \Psi(x, x^{1/u}) = \(1 -\log u\) x + O(x/\log x), 
 $$
 see~\cite[Equations~(1.3) and~(1.8)]{Granville}.
 Thus, if $u =1 +\eta$ with $\eta\asymp (\log x)^{-1/2}$, then the number of non-$x^{1/u}$-smooth 
 numbers $N \le x$ can be estimated as 
\begin{equation}
\label{eq:non-smooth}
 x-  \Psi(x, x^{1/u}) = (1+ o(1))\eta x.  
 \end{equation}
 Hence, for a sufficiently small $\varepsilon > 0$  with $\eta = \(1/2\sqrt{\pi}-\varepsilon\)n^{-1/2}$ and sufficiently large $n$, we 
 obtain from~\eqref{eq:mid binom} and~\eqref{eq:non-smooth} that
  $$
 4^n -  \Psi\(4^n, 4^{n/\(1+(1/2\sqrt{\pi} -\varepsilon)n^{-1/2}\)}\)  < \binom{2n-1}{n-1} ,
 $$
 and since  $\varepsilon > 0$  is arbitrary, we see that we can take $Y$ as in~\eqref{eq:balanced}.
 
Furthermore, with a similar technique as in the proofs of Theorem~\ref{thm:sparse2} 
and Theorem~\ref{thm:sparse3}, one can give an explicit construction of infinitely many rather smooth balanced numbers. 
Namely, taking 
$$k_1=p_2\cdots p_r \mand k_2=p_3\cdots p_r=k_1/3,
$$ 
the same argument as above shows that  $2^{k_1}+1$ is $2^{(2e^{-\gamma}+o(1))k_1/\log\log k_1}$-smooth and $2^{k_2}-1$ is $2^{(2e^{-\gamma}+o(1))k_2/\log\log k_2}$-smooth. Thus, the number $N=(2^{k_1}+1)(2^{k_2}-1)$ is $2^{(2e^{-\gamma}+o(1))k_1/\log\log k_1}=N^{(3e^{-\gamma}/2+o(1))/\log\log\log N}$-smooth and it has $n = k_1+k_2=4k_2$ total digits, exactly $n/2$ of which are ones.

 \section*{Acknowledgements}
 
 The authors are very grateful to Regis de la Bret{\`e}che for suggesting to use results from~\cite{Har16}
 and to the referee for the very careful reading of the manuscript. 
 
 This work started during a very enjoyable visit by the authors  to the 
Max Planck Institute for Mathematics, Bonn, whose hospitality and 
support is very much appreciated. 
 
During the preparation of this work I.S. was also supported in part by the  
Australian Research Council Grant  DP200100355.

\bibliographystyle{alpha}

\end{document}